\documentclass[11pt]{article}
\usepackage{amsthm, amsmath, amssymb, amsfonts, url, booktabs, tikz, setspace, fancyhdr, bm, mathrsfs}
\usepackage{hyperref}
\usepackage{geometry}
\usepackage{hyperref, enumerate}
\usepackage[shortlabels]{enumitem}
\usepackage[babel]{microtype}
\usepackage[english]{babel}
\usepackage[capitalise]{cleveref}
\usepackage{comment}
\usepackage{bbm}
\usepackage{csquotes}
\usepackage{mathabx}
\usepackage{tikz}
\usepackage{graphicx}
\usepackage{float}
\usepackage{amsmath}

\counterwithin{figure}{section}

\newtheorem{theorem}{Theorem}[section]

\newtheorem{lemma}[theorem]{Lemma}

\newtheorem{claim}[theorem]{Claim}

\theoremstyle{definition}

\newtheorem*{defn-non}{Definition}

\usepackage[linesnumbered, ruled]{algorithm2e}
\SetKwRepeat{Do}{do}{while}%

\newenvironment{poc}{\begin{proof}[Proof of the claim]}{\end{proof}}

\newcommand{\C}[1]{{\protect\mathcal{#1}}}

\usepackage{todonotes}
\newcommand{\A}{\mathcal{A}}
\newcommand{\Q}{\mathbb{Q}}
\newcommand{\Z}{\mathbb{Z}}
\newcommand{\F}{\mathbb{F}}

\newcommand{\rank}{\operatorname{rank}}
\newcommand{\ii}{\mathrm{i}}

\title{A superlogarithmic saving for Oddtown modulo composite numbers}
\author{
Yuhao Zhao\thanks{School of Mathematical Sciences, University of Science and Technology of China, Hefei, China. Email: \texttt{zhaoyh21@mail.ustc.edu.cn}.}
}
\date{}

\begin{document}

\maketitle

\begin{abstract}
Let $f_{\ell}(n)$ be the largest size of a family $\mathcal{A}\subseteq2^{[n]}$ such that no member has size divisible by $\ell$, while the intersection of every two distinct members has size divisible by $\ell$, and let $\omega(\ell)$ denote the number of distinct prime divisors of $\ell$. For any prime power $\ell$, the classical answer is $f_{\ell}(n)=n$. When $\omega(\ell)\geq 2$, Bukh, Chao, and Zheng recently proved $\omega(\ell)n-O_{\ell}\left(n^{\frac{\omega(\ell)-2}{\omega(\ell)-1}}(\log n)^{C_{\ell}}\right)\leq f_{\ell}(n)\leq\omega(\ell)n-2\omega(\ell)\log n+11$ for some $C_{\ell}>0$. When $\ell$ has at least two distinct odd prime divisors, they further used Fourier analysis to improve the upper bound to $f_{\ell}(n)\leq\omega(\ell)n-(2\omega(\ell)+\varepsilon_{\ell})\log n$ for some $\varepsilon_{\ell}>0$, provided that $n$ is sufficiently large in terms of $\ell$ . 

For every fixed $\ell$ with $\omega(\ell)\geq2$, we prove
\[
f_{\ell}(n)\leq\omega(\ell)n-\Omega_{\ell}(\log n\log\log n)
\]
for large $n$. The upper bound relies on a submatrix lemma of Bhowmick, Dvir, and Lovett, which is based on the bounded-torsion polynomial Freiman--Ruzsa conjecture recently proved by Gowers, Green, Manners, and Tao.
\end{abstract}

\section{Introduction}

For every positive integer \(n\), write \([n]=\{1,\ldots,n\}\). Throughout the paper, all logarithms are to base \(2\). A family \(\A\subseteq2^{[n]}\) is an \(\ell\)-\textit{Oddtown} if \(|A|\not\equiv0\pmod{\ell}\) for every \(A\in\A\), while \(|A\cap B|\equiv0\pmod{\ell}\) for all distinct \(A,B\in\A\). We denote the maximum possible size of such a family by \(f_{\ell}(n)\), and write \(\omega(\ell)\) for the number of distinct prime divisors of \(\ell\). %The notation \(O_{\ell}(g(n))\) means a quantity whose absolute value is at most a constant depending only on \(\ell\) times \(g(n)\); the notation \(\Omega_{\ell}(g(n))\) has the corresponding lower bound meaning. %Multiple subscripts indicate dependence on all listed parameters. %As usual, \(o(1)\) denotes a quantity tending to zero as \(n\) tends to infinity.

The classical Oddtown problem is the case \(\ell=2\). It originated in a question of Erd\H{o}s and was solved independently by Berlekamp \cite{Berlekamp1969} and Graver \cite{Graver1975}, and it was one of the most famous applications of linear algebra methods in combinatorics~\cite{BabaiFrankl2022}. The Oddtown theorem shows \(f_{2}(n)\le n\), which is tight, as witnessed by the family of all singletons. More generally, the same answer holds when the modulus is any prime power; see \cite[Ex.~1.1.23]{BabaiFrankl2022}. 

When $\omega(\ell)\ge 2$, the trivial bounds are \(n\leq f_{\ell}(n)\leq\omega(\ell)n\). An argument of Szegedy (see \cite[Ex.~1.1.28]{BabaiFrankl2022}) for \(\ell=6\) gives \(f_{6}(n)\leq2n-2\log n\) for \(n\neq3\). Bukh, Chao, and Zheng \cite{BukhChaoZheng2025} observed that the argument extends to square-free moduli, giving \(f_{\ell}(n)\leq\omega(\ell)n-\omega(\ell)\log n\), and subsequently proved 
\[
\omega(\ell)n-O_{\ell}\left(n^{\frac{\omega(\ell)-2}{\omega(\ell)-1}}(\log n)^{C_{\ell}}\right)\leq f_{\ell}(n)\leq\omega(\ell)n-2\omega(\ell)\log n+11
\]
for some $C_{\ell}>0$, whenever \(\omega(\ell)\geq2\). For moduli having at least two distinct odd prime divisors, their Fourier analytic argument gives \(f_{\ell}(n)\leq\omega(\ell)n-(2\omega(\ell)+\varepsilon_{\ell})\log n\) for some \(\varepsilon_{\ell}>0\), provided that \(n\) is sufficiently large in terms of \(\ell\) \cite{BukhChaoZheng2025}. 

In this note, we improve the deficit in the upper bound from logarithmic order to order \(\log n\log\log n\), as stated below.

\begin{theorem}\label{thm:main}
Let \(\ell\geq2\) be fixed with \(\omega(\ell)\geq2\). There exists some constant \(c_{\ell}>0\) such that
\[
f_{\ell}(n)\leq\omega(\ell)n-c_{\ell}\log n\log\log n
\]
for every sufficiently large \(n\).
\end{theorem}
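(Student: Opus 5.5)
We reduce to prime moduli and then use a rank-deficit argument powered by the submatrix lemma of Bhowmick, Dvir, and Lovett. Let \(p_1,\dots,p_k\) be the distinct primes dividing \(\ell\), \(k=\omega(\ell)\ge2\), and let \(\A\) be an \(\ell\)-Oddtown with \(|\A|=m\). For each \(A\in\A\) some prime \(p_i\) has \(|A|\not\equiv0\pmod{p_i}\) (after replacing \(\ell\) by its radical, which is harmless since divisibility by \(\ell\) implies divisibility by each \(p_i\)). We partition \(\A=\A_1\cup\dots\cup\A_k\) by choosing such an \(i\). Every \(\A_i\) is then a \(p_i\)-Oddtown, so the incidence matrix \(M_i\) of \(\A_i\) has Gram matrix \(M_iM_i^{T}\) diagonal and invertible over \(\F_{p_i}\). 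Hence \(|\A_i|=\rank_{\F_{p_i}}M_i\le n\), which gives the trivial bound \(kn\). The deficit has to come from interaction between the classes. For \(j\ne i\), intersections between members of \(\A_i\) and \(\A_j\) are divisible by \(p_i\), so the rows of \(\A_j\), viewed over \(\F_{p_i}\), lie in the orthogonal complement of the row space of \(M_i\). Therefore \(|\A_i|+\rank_{\F_{p_i}}(\text{rows of }\A\setminus\A_i)\le n\).

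The goal is to show that if \(m\ge kn-D\) with \(D=c\log n\log\log n\), then the 0/1 matrices involved have small rank over two different fields at once, and this forces a contradiction. Take \(k=2\) as the model case, with primes \(p,q\) and classes \(\A_1,\A_2\), so \(|\A_1|,|\A_2|\ge n-D\). Form \(N=M_2M_2^{T}\), the \(|\A_2|\times|\A_2|\) matrix of pairwise intersections. Over \(\F_q\) it is diagonal with nonzero diagonal, so its rank is full. Over \(\F_p\), however, its rows lie in the span of at most \(n-|\A_1|\le D\) vectors: the rows of \(M_2\) live in an \(\F_p\)-space of dimension at most \(D\) modulo \(p\), because they are orthogonal to the \(n-D\) independent rows of \(M_1\). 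So \(\rank_{\F_p}N\le D\). The integer matrix \(N\) has entries in \([0,n]\), but working with the 0/1 matrix \(M_2\) itself is cleaner: \(M_2\) has \(\F_p\)-rank at most \(D\) and \(\F_q\)-rank at least \(n-D\).

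We then apply the Bhowmick--Dvir--Lovett submatrix lemma. A Boolean (or bounded-alphabet) matrix of rank \(r\) over \(\F_p\) contains a monochromatic or low-complexity combinatorial rectangle of size \(2^{-O(r)}\) times the dimensions; this is the polynomial bound supplied by the bounded-torsion PFR theorem of Gowers--Green--Manners--Tao. We iterate it to decompose a large portion of \(M_2\) into structured blocks, each of which has small \(\F_q\)-rank. Set \(r=D\). We need the number of blocks needed to capture rank about \(n\) over \(\F_q\) to exceed what is available. Each rectangle of relative size \(2^{-O(D)}\) covers at least a \(2^{-O(D)}\) fraction of the rows. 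Over \(\F_q\), the diagonal Gram structure forces each such rectangle to contribute only \(O(1)\) rank on rows that must be pairwise \(q\)-orthogonal with nonzero self-product. A counting argument then shows \(n-D\le 2^{O(D)}\cdot\mathrm{poly}(D)\) fails unless \(D\gtrsim\log n\). This alone recovers only \(\log n\). The extra \(\log\log n\) factor should come from recursing: after extracting a rectangle, the residual matrix still has \(\F_p\)-rank at most \(D\), but we also gain a constraint that lowers the effective \(q\)-dimension. Running \(\log\log n\) rounds of a density-increment scheme, each costing \(\Omega(\log n)\) in deficit, gives the total \(\Omega(\log n\log\log n)\). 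For general \(k\) we pick two classes with \(|\A_i|,|\A_j|\ge n-D\), which all classes satisfy, and run the same argument.

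The main obstacle is this final quantitative step: turning the submatrix lemma into a deficit that grows faster than \(\log n\). Concretely, we must show that a 0/1 matrix whose rows are pairwise orthogonal over \(\F_q\), each with nonzero self-product, cannot have \(\F_p\)-rank \(o(\log n\log\log n)\) when it has about \(n\) rows. The first thing to try is a contradiction with the lower bound \(n\) on rank over \(\F_q\) for the Gram matrix. The Bhowmick--Dvir--Lovett lemma gives a large submatrix of \(\F_p\)-rank \(0\) or \(1\), which means an essentially constant or rank-one pattern. The \(q\)-orthogonality then bounds that submatrix's size by \(O(1)\) or \(\sqrt{\cdot}\)-type quantities, and iterating yields \(2^{O(\text{rank}/\log\text{rank})}\)-type bounds. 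That form, \(\text{rank}\ge c\log n\log\log n\), is exactly the target.
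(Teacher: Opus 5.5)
Your outer reduction matches the paper's. With $p$ odd, the class whose Gram matrix is nonsingular mod $p$ forces the rows of another class into an $\F_p$-space of small dimension. That other class supplies a large square Boolean minor that is nonsingular over $\Q$. So everything reduces to a cross-characteristic bound like Lemma~\ref{lem:cross-rank}.

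That bound is the whole source of the $\log\log n$, and you do not prove it; the mechanism you sketch would fail. Your claim that $q$-orthogonality caps the size of the constant rectangle is false. $I_t$ has pairwise orthogonal rows of self-product $1$, yet contains an all-zero $\lfloor t/2\rfloor\times\lceil t/2\rceil$ rectangle. Pairwise orthogonality is also lost when you restrict to a subset of columns, so it cannot be the invariant of a recursion. The extra factor does not come from ``$\log\log n$ rounds each costing $\log n$''. With the rectangle size $2^{-O(r)}$ you quote, you get only $\log n$; that size is trivial anyway, since a Boolean matrix of $\F_p$-rank $r$ has at most $2^r$ distinct rows.

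The missing idea is a rank-halving recursion whose hereditary invariant is ``rationally nonsingular''. Take a constant block $A=M[R,S]$ with $|R|,|S|\ge 2^{-C_pd/\log d}t$ from Lemma~\ref{lem:rectangle}. This needs the small-rank prime $p$ to be odd, since for $p=2$ the duality measure $|1-2\theta|$ can vanish; you never arrange this. Write $M=\begin{pmatrix}A&B\\C&D\end{pmatrix}$.

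\begin{itemize}
\item Lemma~\ref{lem:block-rank} over $\F_p$ gives $\rank B+\rank C\le d+1$, so one off-diagonal block has $\F_p$-rank at most $\lfloor (d+1)/2\rfloor$.
\item Nonsingularity of $M$ and $\rank_{\Q}A\le 1$ give $\rank_{\Q}B\ge|R|-1$ and $\rank_{\Q}C\ge|S|-1$. So that block contains a rationally nonsingular minor of order at least $2^{-C_pd/\log d}t-1$.
\item Hence $T_p(d)\le 2^{1+C_pd/\log d}\,T_p(\lfloor (d+1)/2\rfloor)$, where $T_p(d)$ is the largest order of a rationally nonsingular Boolean matrix of $\F_p$-rank at most $d$.
\item Summing $\frac{d/2^k}{\log(d/2^k)}$ over the halving steps gives $T_p(d)\le 2^{O(d/\log d)}$, i.e.\ $d=\Omega(\log t\log\log t)$.
\end{itemize}

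The gain comes from the $1/\log r$ in the exponent of the submatrix lemma, not from the number of rounds.

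Your first step is also wrong when $\ell$ is not squarefree. Passing to $\operatorname{rad}(\ell)$ is not harmless, since $|A|\not\equiv0\pmod{\ell}$ does not give $|A|\not\equiv0\pmod{\operatorname{rad}(\ell)}$. For example, with $\ell=12$ and $|A|=6$, the set $A$ lies in none of your classes.

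If you instead classify by prime powers $q_i=p_i^{\alpha_i}$, then $\A_i$ need not be a $p_i$-Oddtown and its Gram matrix can vanish mod $p_i$. Then $|\A_i|=\rank_{\F_{p_i}}M_i$ can fail. For instance, there are four $10$-element subsets of $[19]$, pairwise meeting in $4$ points, whose incidence vectors sum to $0$ over $\F_2$. Your inequality $|\A_i|+\rank_{\F_{p_i}}(\text{other rows})\le n$ therefore loses its justification.

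The paper handles this in two steps. It uses $\Q$-independence modulo prime powers (Lemma~\ref{lem:independence}). It then uses the isotropic normalization of Lemma~\ref{lem:orthogonality}, which bounds the $\F_{p_i}$-rank of the complementary rows by $(n-|\A_i|)/2$.
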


Our proof relies on a submatrix lemma of Bhowmick, Dvir, and Lovett \cite{BhowmickDvirLovett2014}, which is based on the bounded-torsion polynomial Freiman-Ruzsa conjecture recently proved by Gowers, Green, Manners, and Tao \cite{GowersGreenMannersTao2026}.

\paragraph{Organization.} The rest of this note is organized as follows. We first collect some useful lemmas in Section \ref{sec:upper-lem}, including the submatrix lemma and some linear-algebraic preliminary lemmas. Next, we prove a cross-characteristic rank comparison lemma in Section \ref{sec:cross-rank}. Finally, we complete the proof of the upper bound in Section \ref{sec:upper-proof}.

\paragraph{Notation.} Fix an integer \(\ell\) with \(\omega(\ell)\geq2\) and write \(\ell=\prod_{s=1}^{\omega(\ell)}q_{s}\), where \(q_{s}=p_{s}^{\alpha_{s}}\) and the \(p_{s}\) are distinct primes. 
Over any commutative ring, the \emph{inner product} (or \emph{dot product}) of two vectors $u=(u_1,\dots,u_n)$ and $v=(v_1,\dots,v_n)$ is $u\cdot v:=\sum_{i=1}^n u_i v_i$. A matrix is called \emph{Boolean} if all its entries belong to \(\{0,1\}\). Every Boolean row \(x\in\{0,1\}^{n}\) naturally represents the set \(\{r:x_{r}=1\}\). Clearly, over $\mathbb{Q}$, the self inner product of a Boolean row is the size of this set, and the inner product of two Boolean rows is the size of their intersection. For row vectors \(z_{1},\ldots,z_{r}\), their \emph{Gram matrix} is \((z_{a}\cdot z_{b})_{a,b}\). Thus the entries of the Gram matrix of incidence rows record precisely the set sizes and intersections.

%\section{The upper bound}\label{sec:upper}
%In this section we prove the upper bound. 
For a field \(K\), the notation \(\rank_{K}M\) means the dimension over \(K\) of the row space of \(M\), which equals the dimension of its column space. %Reduction of an integer vector modulo \(p\) is always performed coordinate by coordinate. 
A square integer matrix is called \emph{rationally nonsingular} if it is invertible over \(\Q\), and a submatrix is \emph{constant} if all its entries are equal. A \emph{rectangle} \(M[R,C]\) is a submatrix obtained by selecting a set \(R\) of rows and a set \(C\) of columns. %We shall use rank over two different fields: congruences give information over \(\F_{p}\), whereas rational nonsingularity records ordinary linear independence over \(\Q\).

\section{Some useful lemmas}\label{sec:upper-lem}

In this section, let us first give some useful lemmas. 
We begin with the key submatrix lemma of Bhowmick, Dvir, and Lovett \cite{BhowmickDvirLovett2014} that will be used in the proof. A reader may take Lemma~\ref{lem:rectangle} below as the black box and skip directly to its statement; the next few paragraphs only show how the cited theorems imply exactly that formulation. 

A \textit{list} $A$ of size $t$ over a finite set $\Omega$ is an ordered $t$-tuple $A = (a_1,a_2,...,a_t)$, where each $a_i\in\Omega$. A list can have repetitions. If it does not, we say that it is \textit{twin-free}. For two lists $U=(u_1,\dots,u_t), V=(v_1,\dots,v_t)$ over $\mathbb{Z}_m^n$ let $P_{U,V}$ be the $t\times t$ matrix over $\mathbb{Z}_m$ defined by $P_{U,V} (i,j) =  u_i\cdot v_j$ for $1\le i, j\le t$. Let $\omega$ be a primitive root of unity of order $m$. The \textit{duality measure} of twin-free lists $A,B\subseteq \mathbb{Z}_m^n$ with respect to $\omega$ is defined as
\[
D_\omega(A,B)=\left| \mathbb{E}_{a\sim A, b\sim B} \left[\omega^{a\cdot b}\right] \right|.
\]
For a $t\times t$ matrix $M$ over $\mathbb{Z}_m$, let $\mathrm{rk}(M)$ denote the smallest $r$ such that $M= AB$, where $A$ is a $t\times r$ matrix over $\mathbb{Z}_m$ and $B$ is an $r\times t$ matrix over $\mathbb{Z}_m$. Clearly, when $m=p$ is a prime number, we have $\mathrm{rk}(M)=\mathrm{rank}_{\mathbb{F}_p}(M)$. The submatrix lemma of Bhowmick, Dvir, and Lovett \cite[Lemma~6.3]{BhowmickDvirLovett2014}, together with the bounded torsion polynomial Freiman--Ruzsa theorem of Gowers, Green, Manners, and Tao \cite{GowersGreenMannersTao2026}, gives the following consequence.

\begin{lemma}\label{lem:submatrix}
Let $s,m,n\ge 2$ be integers with $s\mid m$, and let $\omega$ be a primitive root of unity of order $s$. Suppose that $A,B$ are two twin-free lists of the same size over $\mathbb{Z}_s^n$ such that $D_\omega (A,B)\ge \frac{2}{3m^{3/2}}$, and let $\mathrm{rk}(P_{A,B}) = r \ge 2$. 
Then there exist lists $A' \subseteq A$, $B' \subseteq B$ such that $D_\omega(A',B') = 1$, where $|A'| \ge 2^{-c(m)r/\log r}|A|, |B'| \ge 2^{-c(m)r/\log r}|B|$ for some constant $c(m)>0$ depending only on $m$.
\end{lemma}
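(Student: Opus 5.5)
This lemma is not proved from scratch. It is a consequence of two cited results, and the plan is to make that derivation explicit. The statement of \cite[Lemma~6.3]{BhowmickDvirLovett2014} is conditional: it gives exactly this conclusion, but with a loss of the form $2^{-c(m)r/\log r}$, under the hypothesis that a polynomial Freiman--Ruzsa type statement holds over $\mathbb{Z}_s^n$ (equivalently, over groups of bounded torsion). In their paper, this appears as an assumed conjecture (the ``PFR over $\mathbb{Z}_m$'' or bounded-torsion PFR). So the first step is to restate Lemma~6.3 in the form in which it is proved, with its hypotheses:
\begin{itemize}
\item twin-free lists $A,B$ of equal size over $\mathbb{Z}_s^n$ with $s\mid m$;
\item a duality-measure lower bound of order $m^{-3/2}$;
\item the rank parameter $r=\mathrm{rk}(P_{A,B})$.
\end{itemize}
I will then check that the constant $\frac{2}{3m^{3/2}}$, the normalization of $\omega$ as a primitive $s$-th root of unity, and the definition of $\mathrm{rk}$ via factorizations over $\mathbb{Z}_m$ match the conventions used there.

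The second step is to discharge the conditional hypothesis using \cite{GowersGreenMannersTao2026}. Gowers, Green, Manners and Tao prove the following. If $G$ is an abelian group of torsion $t$ and $S\subseteq G$ has $|S+S|\le K|S|$, then $S$ is covered by $O(K^{O_t(1)})$ translates of a subgroup $H$ with $|H|\le|S|$. This is precisely the bounded-torsion polynomial Freiman--Ruzsa statement. The group $\mathbb{Z}_s^n$, and the auxiliary groups $\mathbb{Z}_s^{2n}$ or $\mathbb{Z}_m^{k}$ that arise inside the argument of Bhowmick--Dvir--Lovett, have torsion dividing $m$. So the constants depend only on $m$, and the conjecture they assume holds with exponent depending only on $m$. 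I will trace how that exponent enters their proof. In the Ben-Sasson--Lovett--Ron-Zewi style argument, the PFR exponent only affects the constant in front of $r/\log r$ through the iterated density-increment step. This yields a constant $c(m)$ in the final bound.

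The main obstacle is the bookkeeping between the form of PFR assumed in \cite{BhowmickDvirLovett2014} and the form proved in \cite{GowersGreenMannersTao2026}. The former may be phrased via a Freiman-homomorphism or ``approximate subgroup with large intersection'' formulation rather than a covering formulation. If so, I will use the standard equivalences (Ruzsa covering and the passage from covering to a large intersection with a coset of a subgroup). These cost only polynomial factors in $K$ with torsion-dependent exponents, so they are harmless. A second point is to ensure that $r\ge 2$ avoids $\log r=0$, and that the passage from sublists to lists with $D_\omega(A',B')=1$ (a monochromatic rectangle of $\omega^{a\cdot b}$ up to a global phase) is exactly the conclusion of their Lemma~6.3. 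With these checks, the lemma follows as stated.
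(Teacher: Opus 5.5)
Your plan matches the paper's justification: both treat the lemma as Bhowmick--Dvir--Lovett's Lemma~6.3 with its polynomial Freiman--Ruzsa hypothesis discharged by the bounded-torsion theorem of Gowers--Green--Manners--Tao. The one piece of bookkeeping the paper makes explicit is that Bhowmick--Dvir--Lovett assume the difference-set form $|A-A|\le K|A|$. Ruzsa's inequality $|A+A||A|^{2}\le|A-A|^{3}$ converts this into $|A+A|\le K^{3}|A|$, which is the hypothesis Gowers--Green--Manners--Tao need to cover $A$ by polynomially many cosets of a subgroup of size at most $|A|$; this is exactly the kind of polynomial-cost equivalence you anticipated.
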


The submatrix lemma of Bhowmick, Dvir, and Lovett \cite[Lemma~6.3]{BhowmickDvirLovett2014} was originally conditional on the difference set form of the polynomial Freiman--Ruzsa conjecture. The Ruzsa inequality \(|A+A||A|^{2}\leq|A-A|^{3}\) converts \(|A-A|\leq K|A|\) into \(|A+A|\leq K^{3}|A|\). The theorem of Gowers, Green, Manners, and Tao \cite{GowersGreenMannersTao2026} then covers \(A\) by \(K^{O_{p}(1)}\) cosets of a subgroup of size at most \(|A|\), which is precisely the polynomial covering input used in the proof of the submatrix lemma.

Now we show how Lemma \ref{lem:submatrix} implies Lemma~\ref{lem:rectangle}. Let \(p\) be an odd prime, and let \(M\) be a \(t\times t\) Boolean matrix over $\mathbb{F}_p$ with distinct rows and distinct columns and \(d=\rank_{\F_{p}}M\ge 2\). Elementary linear algebra gives a full rank factorization \(M=UV^{\mathsf T}\) over \(\F_{p}\), where \(U\) and \(V\) each have \(d\) columns. Write \(u_{a}\) and \(v_{b}\) for their rows. 
%In the notation of the cited submatrix lemma, take both modulus parameters \(s\) and \(m\) equal to \(p\). 
Its inner product matrix \(P_{U,V}=(u_{a}\cdot v_{b})_{a,b}\) is exactly \(M\), so its rank parameter is \(d\). If two rows of \(U\) were equal, the corresponding rows of \(M\) would be equal, and the analogous statement holds for \(V\) and the columns of \(M\). Thus distinct rows and columns of \(M\) make both lists twin-free. Fix \(\zeta=e^{2\pi\ii/p}\), and let \(\theta\) be the proportion of entries of \(M\) equal to \(1\). The duality measure of \(U,V\), namely the absolute value of the average of \(\zeta^{u_{a}\cdot v_{b}}\), is
\[
\left|(1-\theta)+\theta\zeta\right|\geq\cos\left(\frac{\pi}{p}\right)\geq\frac{2}{3p^{3/2}},
\]
where the first inequality holds since the minimum distance from the origin to the chord joining \(1\) and \(\zeta\) is \(\cos(\frac{\pi}{p})\).  Applying Lemma \ref{lem:submatrix} with \(s=m=p\), we can find a submatrix with duality measure \(1\). Equality in the triangle inequality then forces all entries in the resulting rectangle to be equal. Hence the rectangle is constant as a Boolean matrix. This gives the following consequence.

\begin{lemma}\label{lem:rectangle}
For every fixed odd prime \(p\), there is a constant \(C_{p}>0\) %and a positive integer \(d_{p}\) 
such that the following holds. If \(M\) is a \(t\times t\) Boolean matrix with distinct rows and distinct columns and \(d=\rank_{\F_{p}}M\geq 2\), then \(M\) contains a constant submatrix \(M[R,C]\) satisfying
\[
|R|\geq2^{-C_{p}d/\log d}t\ \text{ and }\ |C|\geq2^{-C_{p}d/\log d}t.
\]
\end{lemma}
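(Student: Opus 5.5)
The plan is to deduce Lemma~\ref{lem:rectangle} directly from Lemma~\ref{lem:submatrix} with $s=m=p$, exactly as the preceding paragraph sketches, and to set $C_p:=c(p)$.

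\emph{Step 1: factorization.} Regard the Boolean matrix $M$ as a matrix over $\F_p$. Take a full rank factorization $M=UV^{\mathsf T}$ over $\F_p$, where $U,V$ are $t\times d$ with rows $u_a,v_b\in\F_p^d=\Z_p^d$. Then $P_{U,V}=M$ entrywise. Since $\Z_p$ is a field, $\mathrm{rk}(P_{U,V})=\rank_{\F_p}M=d\ge 2$.

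\emph{Step 2: twin-freeness.} If $u_a=u_{a'}$ for some $a\neq a'$, then rows $a$ and $a'$ of $M$ would coincide over $\F_p$. Because the entries lie in $\{0,1\}$ and $p>1$, they would also coincide as Boolean rows, contradicting distinctness. The same argument applies to the columns and $V$. So both lists are twin-free of size $t$. (Lemma~\ref{lem:submatrix} is stated for dimension $n\ge2$; if $d$ is the ambient dimension this is fine since $d\ge2$.)

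\emph{Step 3: duality measure.} With $\zeta=e^{2\pi\ii/p}$ and $\theta$ the density of ones, $D_\zeta(U,V)=|(1-\theta)+\theta\zeta|$. This point lies on the chord from $1$ to $\zeta$, whose distance from the origin is $\cos(\pi/p)$. For $p\ge3$ we have $\cos(\pi/p)\ge 1/2\ge \frac{2}{3p^{3/2}}$, so the hypothesis of Lemma~\ref{lem:submatrix} holds.

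\emph{Step 4: apply Lemma~\ref{lem:submatrix}.} It gives sublists $A'\subseteq U$ and $B'\subseteq V$, that is, row and column index sets $R,C$, with $|R|,|C|\ge 2^{-c(p)d/\log d}t$ and $D_\zeta(A',B')=1$.

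\emph{Step 5: constancy.} $D_\zeta(A',B')=1$ says that the average of the unit complex numbers $\zeta^{M_{ab}}$ over $R\times C$ has modulus $1$. Equality in the triangle inequality then forces all these numbers to be equal. Since the entries $M_{ab}$ lie in $\{0,1\}$ and $\zeta\ne1$, all entries of $M[R,C]$ are equal, so $M[R,C]$ is constant.

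The only real obstacle is interpretive rather than mathematical. One has to make sure that "sublist" in Lemma~\ref{lem:submatrix} means selecting index subsets, so that it corresponds to a genuine rectangle $M[R,C]$. One must also check that the dimension parameter of Lemma~\ref{lem:submatrix} can be taken to be $d$, or that the vectors can be padded with zeros without affecting anything. Everything else is the routine verification above.
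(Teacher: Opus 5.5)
Your proposal is correct and follows the paper's derivation essentially step for step: a full rank factorization $M=UV^{\mathsf T}$ over $\F_p$, twin-freeness from distinct rows and columns, the chord bound $|(1-\theta)+\theta\zeta|\ge\cos(\pi/p)\ge\frac{2}{3p^{3/2}}$, Lemma~\ref{lem:submatrix} with $s=m=p$ and $C_p=c(p)$, and constancy from equality in the triangle inequality. Your explicit check that $\cos(\pi/p)\ge 1/2$ for $p\ge 3$, and your remark that the ambient dimension $d\ge 2$ meets the hypothesis $n\ge 2$, fill in small details the paper leaves implicit.
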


\subsection{Linear-algebraic lemmas} 
We next record a simple independence principle modulo a prime power.  
For an integer \(a\neq0\), we write \(v_{p}(a)\) for the largest integer \(\nu\) such that \(p^{\nu}\mid a\).

\begin{lemma}\label{lem:independence}
Let \(q=p^{\alpha}\), where \(p\) is prime, and let \(z_{1},\ldots,z_{r}\in\Z^{n}\). If \(z_{i}\cdot z_{j}\equiv0\pmod{q}\) for distinct \(i,j\), while \(z_{i}\cdot z_{i}\not\equiv0\pmod{q}\) for every \(i\), then \(z_{1},\ldots,z_{r}\) are linearly independent over \(\Q\).
\end{lemma}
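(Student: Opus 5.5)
Suppose for contradiction that there are rationals \(c_{1},\ldots,c_{r}\), not all zero, with \(\sum_{i}c_{i}z_{i}=0\). The plan is to clear denominators so that the relation has integer coefficients, normalize it so that some coefficient is not divisible by \(p\), and then dot the relation with a suitable \(z_{j}\) to reach a contradiction modulo \(q\).

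First, multiply by a common denominator to assume \(c_{i}\in\Z\). Then divide by the largest power of \(p\) dividing all the \(c_{i}\), which is legitimate because not all \(c_{i}\) vanish. After this step some coefficient, say \(c_{j}\), satisfies \(p\nmid c_{j}\), and the relation still holds over \(\Z\).

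Next, take the inner product of \(\sum_{i}c_{i}z_{i}=0\) with \(z_{j}\):
\[
0=\sum_{i}c_{i}\,(z_{i}\cdot z_{j})\equiv c_{j}\,(z_{j}\cdot z_{j})\pmod{q},
\]
since \(z_{i}\cdot z_{j}\equiv0\pmod q\) for \(i\neq j\). Because \(p\nmid c_{j}\), the integer \(c_{j}\) is a unit in \(\Z/q\Z\), where \(q=p^{\alpha}\). Hence \(z_{j}\cdot z_{j}\equiv0\pmod q\), contradicting the hypothesis.

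The only point requiring care is the normalization step. It is where the prime-power structure of \(q\) is used: a coefficient coprime to \(p\) is automatically invertible modulo \(q\). For composite moduli with several prime factors this step fails, and that failure is precisely the source of the \(\omega(\ell)\) factor in the problem. Otherwise the argument is routine.
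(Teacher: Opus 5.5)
Your proposal is correct and follows the paper's proof essentially verbatim: clear denominators, divide out the minimal \(p\)-adic valuation among the nonzero coefficients so some \(c_{j}\) is prime to \(p\), dot with \(z_{j}\), and use that \(c_{j}\) is a unit modulo \(q=p^{\alpha}\) to force \(z_{j}\cdot z_{j}\equiv0\pmod{q}\). Nothing is missing.
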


\begin{proof}[Proof of Lemma~\ref{lem:independence}]
Suppose that \(\sum_{i=1}^{r}c_{i}z_{i}=0\) is a nonzero rational relation. Clear denominators, let \(\nu=\min_{i:c_{i}\neq0}v_{p}(c_{i})\), and divide all coefficients by \(p^{\nu}\). The coefficients remain integers, and some \(c_{j}\) is coprime to \(p\). Taking the inner product with \(z_{j}\) removes all terms with \(i\neq j\) modulo \(q\) and gives \(c_{j}z_{j}\cdot z_{j}\equiv0\pmod{q}\). An element is a unit in \(\Z/q\Z\) precisely when it is not divisible by \(p\). Thus \(c_{j}\) is invertible modulo \(q\), so the last congruence forces \(z_{j}\cdot z_{j}\equiv0\pmod{q}\), a contradiction.
\end{proof}

The following lemma has been proved in \cite{BukhChaoZheng2025}. 
We include its short proof for completeness. A family is \emph{totally isotropic modulo \(q\)} if every two vectors in the family, including a vector paired with itself, have inner product zero modulo \(q\). 

\begin{lemma}\label{lem:orthogonality}
Let \(q=p^{\alpha}\), where \(p\) is prime, and let \(0\leq k\leq n\) and \(s\geq0\) be integers. Suppose that \(x_{1},\ldots,x_{n-k},y_{1},\ldots,y_{s}\in\Z^{n}\) satisfy \(x_{i}\cdot x_{i}\not\equiv0\pmod{q}\) for every \(i\), \(x_{i}\cdot x_{j}\equiv0\pmod{q}\) for distinct \(i,j\), \(y_{a}\cdot y_{b}\equiv0\pmod{q}\) for all \(a,b\), and \(x_{i}\cdot y_{a}\equiv0\pmod{q}\) for all \(i,a\). Then
\[
\dim_{\F_{p}}\operatorname{span}_{\F_{p}}\{y_{1}\bmod p,\ldots,y_{s}\bmod p\}\leq\frac{k}{2}.
\]
\end{lemma}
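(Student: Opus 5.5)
We argue over \(\F_{p}\), using only the congruence information modulo \(p\) together with the rank information modulo \(q\) carried by the \(x_i\). Write \(\bar x_i,\bar y_a\) for reductions mod \(p\), \(Y=\operatorname{span}_{\F_p}\{\bar y_a\}\), and \(X=\operatorname{span}_{\F_p}\{\bar x_i\}\). Since \(q\mid\) all the relevant inner products, the \(\bar y_a\) are pairwise orthogonal, self-orthogonal, and orthogonal to every \(\bar x_i\) over \(\F_p\). Thus \(Y\) is totally isotropic for the standard bilinear form on \(\F_p^n\), and \(Y\subseteq X^{\perp}\). The naive estimate \(\dim Y\le \dim X^\perp= n-\dim X\) only gives \(k\) once we know \(\dim X=n-k\). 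The factor \(1/2\) must come from isotropy: a totally isotropic subspace of a space whose form is nondegenerate has dimension at most half that space's dimension.

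The first key step is to show \(\dim_{\F_p}X=n-k\), and more strongly that the Gram matrix \(G=(x_i\cdot x_j)\) reduced modulo \(p\) is nonsingular. Modulo \(q\), \(G\) is diagonal with diagonal entries \(x_i\cdot x_i\not\equiv0\pmod q\). These entries need not be units mod \(p\), which is the main obstacle. I would handle it by working with the \(\Z_{(p)}\)-lattice or the \(\Z/q\Z\)-module generated by the \(x_i\), rather than naively mod \(p\). Alternatively, I would first reduce to a cleaner setting: replace each \(x_i\) by dividing out \(p\)-powers where possible. I expect the intended route is actually a module-theoretic one. Consider the \(\Z/q\)-module \(N=(\Z/q)^n\) with the standard form, which is nondegenerate (unimodular). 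Let \(W\) be the submodule generated by the \(y_a\), which is totally isotropic, and \(L\) the one generated by the \(x_i\). By the argument of Lemma~\ref{lem:independence} run modulo \(q\), \(L\) contains \(n-k\) elements whose Gram matrix is diagonal with nonzero entries, and \(W\perp L\).

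The second step bounds \(\dim Y\). I would prove a length inequality: for a submodule \(S\) of the unimodular module \(N\), \(\operatorname{length}(S)+\operatorname{length}(S^\perp)=\operatorname{length}(N)=\alpha n\). Then \(W\subseteq W^\perp\cap L^\perp=(W+L)^\perp\). Since \(W\) is isotropic and orthogonal to \(L\), we can bound the length of \(W\) using \(W\subseteq (W+L)^\perp\). The diagonal Gram structure should show that \(W\cap L\) contributes nothing and that \(L\) has length at least \(n-k\) in a suitable sense. This would give \(2\,\operatorname{length}(W)+\operatorname{length}(L)\le\alpha n\). Finally, I would need to convert lengths into the \(\F_p\)-dimension of \(W/pW\) reduced to \(Y\), which is the number of generators of \(W\). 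Here \(\dim Y\) is at most the minimal number of generators of \(W\), and each cyclic summand has length at least \(1\). The delicate part is making the lengths of \(L\) comparable to \(\alpha(n-k)\) rather than \(n-k\), since nonunit diagonal entries shrink them. If that fails, I would fall back to the special case in which the \(x_i\cdot x_i\) are units, do the clean \(\F_p\) argument \(2\dim Y\le \dim X^\perp=k\), and then seek a reduction for general \(q\).
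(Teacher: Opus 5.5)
There is a genuine gap. Your argument only works when every \(x_i\cdot x_i\) is a unit modulo \(p\). In that case your fallback is correct: the \(\bar{x}_i\) span a nondegenerate \((n-k)\)-dimensional \(X\), so \(X^\perp\) is nondegenerate of dimension \(k\), and the totally isotropic \(Y\subseteq X^\perp\) has dimension at most \(k/2\). This settles \(\alpha=1\).

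For \(\alpha\ge2\) the hypothesis allows \(p\mid x_i\cdot x_i\). Your ``first key step'' (nonsingularity of \(G\bmod p\)) is then false, and the length argument does not repair it. Take \(q=8\), \(k=0\), and \(x_i=2e_i\), \(y_a=4e_a\) for \(1\le i,a\le n\). All hypotheses hold. Here \(L=2(\Z/8)^n\) has length \(2n\), and \(W=4(\Z/8)^n\subseteq L\) has length \(n\). So \(W\cap L=W\neq0\), and your inequality \(2\operatorname{length}(W)+\operatorname{length}(L)\le\alpha n\) fails, since \(4n>3n\).

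Passing to the free summand \(W_0\) generated by lifts of a basis of \(Y\) does not rescue this. You need that replacement anyway: ``each cyclic summand has length at least \(1\)'' only gives \(\operatorname{length}(W)\ge\dim Y\), which loses a factor \(\alpha\) even in the unit case, whereas \(\operatorname{length}(W_0)=\alpha\dim Y\). The real problem is that the needed input \(\operatorname{length}(L)\ge\alpha(n-k)\) is false. With \(x_i=2e_i\) for \(i\le n-k\) one has \(\operatorname{length}(L)=2(n-k)\), and the length count then gives only \(\dim Y\le(n+2k)/6\), which exceeds \(k/2\) whenever \(n>k\). Lengths modulo \(q\) simply do not count a non-primitive \(x_i\) as a full dimension.

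The missing idea is to count dimensions over \(\Q\). There Lemma~\ref{lem:independence} is insensitive to non-unit diagonal entries, because a rational relation is rescaled so that a coefficient of minimal \(p\)-valuation becomes a unit. The paper turns the isotropic \(y\)'s into vectors that lemma can use:
\begin{enumerate}
\item Take \(m=\dim Y\) of the \(y\)'s whose reductions form a basis, and pick \(m\) pivot coordinates \(S\) on which their minor is invertible modulo \(p\), hence modulo \(q\).
\item Replace these \(y\)'s by integer combinations, adjusted by multiples of \(q\), whose restriction to \(S\) is exactly \(I_m\).
\item Subtract \(\sum_a(x_i)_{s_a}y_a\) from each \(x_i\), so that it vanishes on \(S\). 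This changes no inner product modulo \(q\), by isotropy and \(x\perp y\).
\item Delete the coordinates in \(S\). The \(x'_i\) keep their inner products, each \(y'_a\) has \(y'_a\cdot y'_a\equiv-1\), and all distinct pairs are orthogonal modulo \(q\).
\end{enumerate}
Lemma~\ref{lem:independence} then gives \(n-k+m\) independent vectors in \(\Q^{n-m}\), so \(n-k+m\le n-m\), i.e.\ \(m\le k/2\).
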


\begin{proof}
Let \(m\) be the dimension on the left. If \(m=0\), there is nothing to prove, so assume \(m\geq1\). Retain \(y_{1},\ldots,y_{m}\) whose reductions modulo \(p\) form a basis, and let \(Y\) be the \(m\times n\) integer matrix with these retained vectors as rows.

\begin{claim}\label{clm:normalize-isotropic}
After replacing the retained \(y\)-vectors by suitable integer linear combinations and then by vectors congruent to them modulo \(q\), we may assume that their restriction to some \(m\)-element coordinate set \(S\) is exactly \(I_{m}\), without changing any required inner product modulo \(q\).
\end{claim}

\begin{poc}
Because the rows of \(Y\bmod p\) are independent, row reduction over the field \(\F_{p}\) supplies a set \(S\subseteq[n]\) of \(m\) pivot coordinates such that the restriction \(W\) of \(Y\) to \(S\) is nonsingular modulo \(p\). Thus \(\det W\) is not divisible by \(p\), so it is a unit modulo \(q=p^{\alpha}\), and \(W\) is invertible over \(\Z/q\Z\). Choose an integer matrix \(L\) representing its inverse, so \(LW\equiv I_{m}\pmod{q}\), and replace the retained rows by the rows of \(LY\). The matrix \(L\) is also invertible modulo \(p\), so their reductions still span the same \(m\)-dimensional space. Their restriction to \(S\) is \(I_{m}+qK\) for an integer matrix \(K\). We may alter these entries by vectors from \(q\Z^{n}\), supported on \(S\), so that the restriction becomes exactly \(I_{m}\). Integer linear combinations preserve the required congruences because every retained \(y\)-vector is orthogonal to every \(x\)-vector and every retained \(y\)-vector. Adding a vector from \(q\Z^{n}\) changes no inner product modulo \(q\). For ease of notation, we continue to call the transformed rows \(y_{1},\ldots,y_{m}\).
\end{poc}

Enumerate \(S=\{s_{1},\ldots,s_{m}\}\) according to the normalized identity block. For each \(x_{i}\), replace it by \(x_{i}-\sum_{a=1}^{m}(x_{i})_{s_{a}}y_{a}\). The resulting vector vanishes on \(S\), and all its inner products modulo \(q\) remain unchanged because the \(y\)-rows are totally isotropic and orthogonal to every \(x_{i}\). We again keep the same symbol \(x_{i}\) for the transformed vector. Delete the coordinates in \(S\), and denote the resulting vectors by \(x'_{i}\) and \(y'_{a}\). Since every transformed \(x_{i}\) is zero on \(S\), deleting these coordinates does not change any inner product involving an \(x'_{i}\). The restriction of \(y_{a}\) to \(S\) is the \(a\)-th standard basis vector, so deletion subtracts \(1\) from its self inner product and \(0\) from its inner product with every other retained vector. Therefore \(x'_{i}\cdot x'_{i}\not\equiv0\pmod{q}\), \(y'_{a}\cdot y'_{a}\equiv-1\pmod{q}\), and any two distinct vectors in the list \(x'_{1},\ldots,x'_{n-k},y'_{1},\ldots,y'_{m}\) are orthogonal modulo \(q\). Lemma~\ref{lem:independence} gives \(n-k+m\) independent vectors in \(\Q^{n-m}\). Thus \(n-k+m\leq n-m\), which is equivalent to \(m\leq\frac{k}{2}\).
\end{proof}

We will also use the following elementary block rank inequality. It says that if the upper left block has small rank, then the two off diagonal blocks cannot both carry almost all of the rank of the full matrix.

\begin{lemma}\label{lem:block-rank}
Over any field, if \(P=\begin{pmatrix}A&B\\C&D\end{pmatrix}\), then
\[
\rank B+\rank C\leq\rank P+\rank A.
\]
\end{lemma}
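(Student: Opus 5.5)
The plan is to use the standard rank inequality for a sum of two matrices after splitting \(P\) into pieces that isolate the blocks. Write the four blocks with sizes compatible: \(A\) is \(a\times c\), \(B\) is \(a\times d\), \(C\) is \(b\times c\), \(D\) is \(b\times d\). Consider the matrix
\[
Q=\begin{pmatrix}A&B\\C&0\end{pmatrix}.
\]
First I compare \(Q\) with \(P\). We have \(P-Q=\begin{pmatrix}0&0\\0&D\end{pmatrix}\), which does not directly give what we want. So instead I will argue via row and column spaces directly, which avoids any dependence on \(D\).

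The key step is a dimension count. Let \(\mathcal{R}\) denote the row space of the top block row \((A\ B)\) and \(\mathcal{C}\) the column space of the left block column \(\begin{pmatrix}A\\C\end{pmatrix}\). Then \(\rank B\leq \rank(A\ B)\), and more precisely \(\rank(A\ B)\leq \rank A+\rank B\) is the wrong direction; what we need is \(\rank B\leq \rank(A\ B)-\rank A+\rank A\)... Instead, use: \(\rank(A\ B)\geq\rank B\) and \(\rank\begin{pmatrix}A\\C\end{pmatrix}\geq\rank C\), together with the inequality
\[
\rank(A\ B)+\rank\begin{pmatrix}A\\C\end{pmatrix}\leq \rank P+\rank A.
\]
This last inequality is the heart of the proof. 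To prove it, project: let \(\pi\) be the map sending a row vector of \(P\) (of length \(c+d\)) to its first \(c\) coordinates. The row space \(W\) of \(P\) contains the row space \(W_1\) of \((A\ B)\). Consider the subspace \(W_1\cap\ker\pi\), i.e.\ rows of the form \((0\ \ast)\) in \(W_1\); its dimension is \(\rank(A\ B)-\rank A\), since \(\pi(W_1)\) is the row space of \(A\). Similarly \(\pi(W)\) is the row space of \(\begin{pmatrix}A\\C\end{pmatrix}\), which has dimension \(\rank\begin{pmatrix}A\\C\end{pmatrix}\), and \(\dim W=\dim\pi(W)+\dim(W\cap\ker\pi)\geq \rank\begin{pmatrix}A\\C\end{pmatrix}+\dim(W_1\cap\ker\pi)\). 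Combining gives \(\rank P\geq \rank\begin{pmatrix}A\\C\end{pmatrix}+\rank(A\ B)-\rank A\), as desired.

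Finally substitute \(\rank B\leq\rank(A\ B)\) and \(\rank C\leq\rank\begin{pmatrix}A\\C\end{pmatrix}\) to conclude \(\rank B+\rank C\leq\rank P+\rank A\). The only genuine step is the projection count in the previous paragraph (essentially the Frobenius-type inequality); everything else is monotonicity of rank under taking submatrices, and the argument works over any field since it uses only dimensions of subspaces and rank–nullity.
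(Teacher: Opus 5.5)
Your proof is correct, but it takes a different route from the paper.

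\textbf{Your route.} You prove the stronger intermediate inequality
\[
\rank(A\ B)+\rank\begin{pmatrix}A\\C\end{pmatrix}\le\rank P+\rank A
\]
by a coordinate-free count. Let $W$ be the row space of $P$, let $W_1\subseteq W$ be the row space of $(A\ B)$, and let $\pi$ be the projection onto the first block of coordinates. Rank--nullity gives $\dim(W_1\cap\ker\pi)=\rank(A\ B)-\rank A$ and $\dim W=\dim\pi(W)+\dim(W\cap\ker\pi)$. The inclusion $W_1\cap\ker\pi\subseteq W\cap\ker\pi$ then finishes it. Monotonicity of rank under passing to the submatrices $B$ and $C$ yields the lemma.

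\textbf{The paper's route.} The paper uses Gaussian elimination. Row operations among the top block rows and column operations among the left block columns bring $A$ to an $I_a$ pivot block. The pivot rows and columns are then used to clear $B$ and $C$ there, giving the normal form $\begin{pmatrix}I_a&0&0\\0&0&B_0\\0&C_0&D_0\end{pmatrix}$. From this, $\rank P\ge a+\rank B_0+\rank C_0$. Also $\rank B\le a+\rank B_0$ and $\rank C\le a+\rank C_0$, because the clearing steps only zero out the $a$ pivot rows of $B$ and the $a$ pivot columns of $C$.

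\textbf{Comparison.} Your version is shorter. It avoids the bookkeeping about which entries change during elimination, which the paper treats somewhat informally. It also exposes the lemma as an instance of the Frobenius rank inequality $\rank(XY)+\rank(YZ)\le\rank Y+\rank(XYZ)$, with $Y=P$ and $X,Z$ the row and column selection matrices. The paper's elimination is more hands-on. Its normal form in fact also gives your stronger intermediate inequality, since $\rank(A\ B)=a+\rank B_0$ and $\rank\begin{pmatrix}A\\C\end{pmatrix}=a+\rank C_0$.

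The false starts at the beginning of your write-up (the matrix $Q$ and the aborted chain of inequalities) play no role and should be deleted.
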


\begin{proof}
Put \(a=\rank A\). Using invertible row operations among the upper block rows and invertible column operations among the left block columns, first reduce \(A\) to a matrix with an \(I_{a}\) pivot block and zeros elsewhere. Further column operations clear the entries of \(B\) in the pivot rows, and further row operations clear the entries of \(C\) in the pivot columns. These operations preserve the rank of \(P\) and reduce it to a matrix of the form
\[
\begin{pmatrix}I_{a}&0&0\\0&0&B_{0}\\0&C_{0}&D_{0}\end{pmatrix}.
\]
%The pivot block contributes \(a\) to the rank. After deleting its rows and columns, choose \(\rank B_{0}\) independent upper rows; all of them vanish on the middle column block. Next choose \(\rank C_{0}\) bottom rows whose projections to the middle column block are independent. In any linear relation among these chosen rows, looking first at the middle block forces all coefficients of the bottom rows to be zero, and then looking at the right block forces all coefficients of the upper rows to be zero. These rows are therefore independent, so 
Then it is easy to see that \(\rank P\geq a+\rank B_{0}+\rank C_{0}\). During the elimination, at most \(a\) row directions were removed from \(B\), and at most \(a\) column directions were removed from \(C\). Hence \(\rank B\leq a+\rank B_{0}\) and \(\rank C\leq a+\rank C_{0}\). Combining these inequalities proves the lemma.
\end{proof}

\section{$\mathbb{Q}$-rank versus $\mathbb{F}_p$-rank}\label{sec:cross-rank}
In this subsection we will use the lemmas from the last subsection to estimate the \(\F_{p}\)-rank of a rational nonsingularity matrix. The proof uses the large rectangle recursion familiar from communication complexity, in particular from the work of Nisan and Wigderson \cite{NisanWigderson1995}. The idea is to find a large constant block, use the block rank inequality to make one off-diagonal block have roughly half the rank over \(\F_{p}\), and use rational nonsingularity to retain a large square minor inside that block. Repeating this rank reduction forces the original modular rank to be large.

\begin{lemma}\label{lem:cross-rank}
For every fixed odd prime \(p\), there are constants \(t_{p},\gamma_{p}>0\) such that every rationally nonsingular \(t\times t\) Boolean matrix \(P\) with \(t\geq t_{p}\) satisfies
\[
\rank_{\F_{p}}P\geq\gamma_{p}\log t\log\log t.
\]
\end{lemma}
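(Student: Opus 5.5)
Let \(d=\rank_{\F_p}P\) and iterate a rank-halving step in the style of Nisan--Wigderson. We maintain a square Boolean submatrix \(P_i\) of \(P\) of size \(t_i\times t_i\) that is rationally nonsingular and has \(d_i=\rank_{\F_p}P_i\). Start with \(P_0=P\), \(t_0=t\), \(d_0=d\). Rational nonsingularity forces distinct rows and columns: two equal rows would make the determinant vanish. So if \(d_i\ge 2\) we may apply Lemma~\ref{lem:rectangle} to \(P_i\), which gives a constant rectangle \(P_i[R,C]\) with \(|R|,|C|\ge 2^{-C_p d_i/\log d_i}t_i\). We may trim so that \(|R|=|C|=:k\).

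Permute rows and columns so that \(R\) and \(C\) come first, and write
\[
P_i=\begin{pmatrix}A&B\\ E&D\end{pmatrix}
\]
with \(A\) constant, so \(\rank_{\F_p}A\le 1\). By Lemma~\ref{lem:block-rank},
\[
\rank_{\F_p}B+\rank_{\F_p}E\le d_i+1,
\]
so one of the off-diagonal blocks, say \(B\) (of size \(k\times(t_i-k)\)), has \(\F_p\)-rank at most \((d_i+1)/2\).

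Rational nonsingularity of \(P_i\) means the \(k\) rows indexed by \(R\) are linearly independent over \(\Q\). These rows are \([A\mid B]\), and \(A\) has \(\Q\)-rank at most \(1\), so \(B\) has \(\Q\)-rank at least \(k-1\). We may therefore extract a \((k-1)\times(k-1)\) rationally nonsingular square submatrix \(P_{i+1}\) of \(B\), which is Boolean. Since \(P_{i+1}\) is a submatrix of \(B\), its \(\F_p\)-rank is at most \((d_i+1)/2\). (If instead \(E\) is the light block, use the column version of the same argument.) Thus
\[
t_{i+1}\ge 2^{-C_p d_i/\log d_i}t_i-1,\qquad d_{i+1}\le \frac{d_i+1}{2}.
\]

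We iterate until \(d_i\le 2\), which takes about \(\log d\) steps. A Boolean rationally nonsingular matrix of \(\F_p\)-rank at most \(2\) has at most \(p^2\) distinct rows, since each row is a vector in a space of size \(p^2\). Hence \(t_{\mathrm{final}}\le p^2\).

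The total loss in \(\log t\) is
\[
\sum_i \frac{C_p d_i}{\log d_i}+O(\log d),
\]
where the \(O(\log d)\) term absorbs the \(-1\)'s, valid as long as \(t_i\) stays at least \(2\). Because \(d_i\approx d2^{-i}\), the sum is dominated geometrically by its first terms and is \(O(d/\log d)\). A small care point: for \(i\) near the end, \(d_i/\log d_i\) is bounded and the geometric decay still gives total \(O(d/\log d)\). This yields
\[
\log t\le O_p\!\left(\frac{d}{\log d}\right)+O(\log d)+2\log p,
\]
so \(d/\log d\ge c\log t\). Inverting, \(d\ge \gamma_p\log t\log\log t\) for \(t\) large.

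The main obstacle is keeping the recursion honest. Lemma~\ref{lem:rectangle} needs a square matrix with distinct rows and columns, which we maintain by always passing to a rationally nonsingular square minor. The additive \(-1\) losses and the small-\(d_i\) regime must not swamp the bound; I would handle these by stopping once \(d_i\) falls below a constant and bounding \(t_i\) there by \(p^{d_i}\).
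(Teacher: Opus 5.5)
Your proposal is correct and follows essentially the same route as the paper. You take a constant rectangle from Lemma~\ref{lem:rectangle}, use the block rank inequality to push the $\F_p$-rank of one off-diagonal block down to about half, and use rational nonsingularity to extract a square minor of order $k-1$ inside that block. The only difference is packaging: the paper writes this as the recurrence $T_p(d)\le 2^{1+C_pd/\log d}\,T_p(\lfloor (d+1)/2\rfloor)$ for the extremal function $T_p(d)$ and solves it by induction, whereas you unroll the iteration and sum the losses directly, which is the same computation.
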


\begin{proof}
For \(d\geq1\), let \(T_{p}(d)\) be the largest order of a rationally nonsingular square Boolean matrix of rank at most \(d\) over \(\F_{p}\), and let $M$ be such a matrix of order $T_p(d)$. Suppose the rank of $M$ over \(\F_{p}\) is \(e\leq d\). Note that rational nonsingularity over \(\Q\) forces both its rows and its columns to be distinct. Moreover, row reduction of a basis for its row space produces \(e\) pivot coordinates on which coordinate projection is injective on the entire row space. In particular, it is injective on the Boolean rows. Each projected row still belongs to \(\{0,1\}^{e}\), so there are at most \(2^{e}\) such rows. Thus the order is at most \(2^{e}\leq2^{d}\), and only finitely many orders are possible. Therefore \(T_{p}(d)\) is well defined and satisfies \(T_{p}(d)\leq2^{d}\).

\begin{claim}\label{clm:recurrence}
For all sufficiently large \(d\),
\[
T_{p}(d)\leq2^{1+C_{p}d/\log d}T_{p}\left(\left\lfloor\frac{d+1}{2}\right\rfloor\right).
\]
\end{claim}

\begin{poc}
Let \(M\) be a rationally nonsingular square Boolean matrix of order \(t=T_{p}(d)\), and put \(e=\rank_{\F_{p}}P\leq d\). All ranks in the rest of this claim, unless marked by \(\Q\), are over \(\F_{p}\). If \(e<d_{p}\) for some \(d_p\), then \(t\leq2^{e}\leq2^{d_{p}-1}\). Since \(T_{p}(\left\lfloor\frac{d+1}{2}\right\rfloor)\geq1\) and the prefactor in Claim~\ref{clm:recurrence} tends to infinity, the claim holds in this case after increasing the threshold for \(d\). 
We may increase \(d_{p}\) so that \(x/\log x\) is increasing for \(x\geq d_{p}\). Assume now that \(e\geq d_p\) is sufficiently large. 

By Lemma \ref{lem:rectangle}, we can find a constant $r\times s$ submatrix of \(M\) with at least \(\delta_{d}t\) rows and columns, where \(\delta_{d}=2^{-C_{p}d/\log d}\). After permuting rows and columns, write \(M=\begin{pmatrix}A&B\\C&D\end{pmatrix}\), where \(A\) is this constant \(r\times s\) block and \(r,s\geq\delta_{d}t\). The block \(A\) has rank at most \(1\), so Lemma~\ref{lem:block-rank} gives \(\rank B+\rank C\leq\rank M+\rank A\leq d+1\). Hence at least one of \(B,C\) has rank at most \(\left\lfloor\frac{d+1}{2}\right\rfloor\).

Note that the \(r\) rows of \((A\ B)\) are a subset of the rows of the rationally nonsingular matrix \(M\); hence they are independent over \(\Q\). Since \(A\) has rational rank at most \(1\), the inequality \(\rank_{\Q}(A\ B)\leq\rank_{\Q}A+\rank_{\Q}B\) gives \(\rank_{\Q}B\geq r-1\). Applying the same argument to the \(s\) selected columns gives \(\rank_{\Q}C\geq s-1\). It is clear that a matrix of \(\mathbb{Q}\)-rank \(h\) contains an \(h\times h\) rationally nonsingular submatrix. Therefore, either \(B\) is a block of \(\F_{p}\)-rank at most \(\left\lfloor\frac{d+1}{2}\right\rfloor\) and  contains a rationally nonsingular square submatrix of order \(r-1\), or \(C\) is a block of \(\F_{p}\)-rank at most \(\left\lfloor\frac{d+1}{2}\right\rfloor\) and contains one of order \(s-1\). In either case the square submatrix has order at least \(\min\{r-1,s-1\}\) and \(\F_{p}\)-rank at most \(\left\lfloor\frac{d+1}{2}\right\rfloor\). If \(\delta_{d}t\geq2\), its order is at least \(\frac{\delta_{d}t}{2}\), and therefore \(\frac{\delta_{d}t}{2}\leq T_{p}(\left\lfloor\frac{d+1}{2}\right\rfloor)\), which rearranges to the claimed recurrence. If \(\delta_{d}t<2\), then \(t<2\delta_{d}^{-1}\), and the same recurrence follows from \(T_{p}(\left\lfloor\frac{d+1}{2}\right\rfloor)\geq1\).
\end{poc}

\begin{claim}\label{clm:recurrence-solution}
There is a constant \(K_{p}>0\) such that \(T_{p}(d)\leq2^{K_{p}d/\log d}\) for every sufficiently large \(d\).
\end{claim}

\begin{poc}
Put \(\phi(d)=\frac{d}{\log d}\). The recurrence replaces \(d\) by approximately \(\frac{d}{2}\), while its multiplicative cost has logarithm \(1+C_{p}\phi(d)\). Since \(\phi(d/2)/\phi(d)\) tends to \(\frac{1}{2}\), choose \(D_{0}\) so large that Claim~\ref{clm:recurrence}, the inequality \(1\leq\phi(d)\), and the contraction \(\phi(\left\lfloor\frac{d+1}{2}\right\rfloor)\leq\frac{3}{5}\phi(d)\) all hold for \(d>D_{0}\). Since every \(T_{p}(d)\) is finite, we may choose \(K_{p}\) large enough that \(T_{p}(d)\leq2^{K_{p}\phi(d)}\) for \(2\leq d\leq D_{0}\) and \(C_{p}+1+\frac{3K_{p}}{5}\leq K_{p}\). For \(d>D_{0}\), take base \(2\) logarithms in Claim~\ref{clm:recurrence} and apply the induction hypothesis at \(\left\lfloor\frac{d+1}{2}\right\rfloor\). The resulting upper bound is \(1+C_{p}\phi(d)+\frac{3K_{p}}{5}\phi(d)\leq K_{p}\phi(d)\). This proves the claim by induction on \(d\).
\end{poc}

If a rationally nonsingular square Boolean matrix \(P\) has order \(t\) and rank \(d\) over \(\F_{p}\), then by the definition of \(T_{p}(d)\) we have \(t\leq T_{p}(d)\). The trivial bound \(T_p(d)\leq 2^d\) also gives \(d\geq\log t\). In particular, \(d\) is sufficiently large when \(t\) is sufficiently large, so Claim~\ref{clm:recurrence-solution} applies and gives \(\log t\leq\frac{K_{p}d}{\log d}\). Rearranging gives \(d\geq\frac{\log t\log d}{K_{p}}\). Since \(d\geq\log t\), we also have \(\log d\geq\log\log t\), and hence \(d\geq\frac{\log t\log\log t}{K_{p}}\). Taking \(\gamma_{p}=K_{p}^{-1}\) and increasing \(t_{p}\) completes the proof.
\end{proof}

\section{Proof of the main result}\label{sec:upper-proof}
Now we give the proof of Theorem~\ref{thm:main}.
\begin{proof}[Proof of Theorem~\ref{thm:main}]
Suppose \(\omega(\ell)\geq2\). Let \(\A\) be an \(\ell\)-Oddtown on \([n]\), and write \(\ell=\prod_{i=1}^{\omega(\ell)}q_{i}\), where \(q_{i}=p_{i}^{\alpha_{i}}\). The prime powers \(q_{i}\) are pairwise coprime, so an integer is divisible by \(\ell\) exactly when it is divisible by every \(q_{i}\). For a set \(A\subseteq[n]\), let \(\boldsymbol{1}_{A}\in\{0,1\}^{n}\) be its incidence vector, defined by \((\boldsymbol{1}_{A})_{r}=1\) if \(r\in A\) and \((\boldsymbol{1}_{A})_{r}=0\) otherwise. Thus \(\boldsymbol{1}_{A}\cdot\boldsymbol{1}_{B}=|A\cap B|\), so all Oddtown conditions become inner product congruences.

For each \(i\in[\omega(\ell)]\), define \[\A_{i}=\{A\in\A:|A|\not\equiv0\pmod{q_{i}}\}\] and \(\A'_{i}=\A\setminus\A_{i}\).
It is clear that \(|\A|\leq\sum_{i=1}^{\omega(\ell)}|\A_{i}|\). For a fixed \(i\), the incidence vectors of \(\A_{i}\) have nonzero self inner products modulo \(q_{i}\), while distinct vectors are orthogonal modulo \(q_{i}\). By Lemma~\ref{lem:independence}, all incidence vectors of \(\A_i\) are independent in \(\Q^{n}\), and hence \(|\A_{i}|\leq n\).

Since \(\omega(\ell)\geq2\), at least one prime divisor of \(\ell\) is odd. Fix \(i\) with \(p_{i}\) odd, and let \(M_{i}\) be the \(|\A'_{i}|\times n\) incidence matrix whose rows are the vectors \(\boldsymbol{1}_{A}\) for \(A\in\A'_{i}\).

We shall estimate the rank of this matrix in two opposite directions. The orthogonality modulo \(q_{i}\) will give an upper bound, while a different prime power component will produce a large rationally nonsingular square submatrix, and Lemma~\ref{lem:cross-rank} will then give a lower bound over \(\F_{p_{i}}\).

\begin{claim}\label{clm:upper-deficit}
One has
\begin{equation*}
\rank_{\F_{p_{i}}}M_{i}\leq\frac{n-|\A_{i}|}{2}.
\end{equation*}
\end{claim}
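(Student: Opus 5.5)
This is a direct application of Lemma~\ref{lem:orthogonality} with \(q=q_{i}=p_{i}^{\alpha_{i}}\). We take the \(x\)-vectors to be the incidence vectors \(\boldsymbol{1}_{A}\) for \(A\in\A_{i}\), and the \(y\)-vectors to be the incidence vectors \(\boldsymbol{1}_{B}\) for \(B\in\A'_{i}\). We set \(k=n-|\A_{i}|\), which is a legitimate choice since we already know \(|\A_{i}|\leq n\) from Lemma~\ref{lem:independence}, so \(0\leq k\leq n\). We also set \(s=|\A'_{i}|\).

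Next we check the hypotheses. Each step uses only the fact that \(\ell\)-divisibility implies \(q_{i}\)-divisibility.
\begin{itemize}
\item For \(A\in\A_{i}\), \(|A|\not\equiv0\pmod{q_{i}}\) holds by definition of \(\A_{i}\).
\item For distinct \(A,A'\in\A\), \(|A\cap A'|\equiv0\pmod{\ell}\). Hence \(|A\cap A'|\equiv 0\pmod{q_{i}}\). This covers both the \(x\)–\(x\) pairs and the \(x\)–\(y\) pairs, since \(\A_{i}\) and \(\A'_{i}\) are disjoint, so an \(x\)-set and a \(y\)-set are always distinct.
\item For distinct \(B,B'\in\A'_{i}\), orthogonality modulo \(q_{i}\) follows the same way.
\item For a single \(B\in\A'_{i}\), \(|B|\equiv0\pmod{q_{i}}\) holds by definition of \(\A'_{i}\). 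So the \(y\)-family is totally isotropic modulo \(q_{i}\), self-pairings included.
\end{itemize}

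Lemma~\ref{lem:orthogonality} then gives that the \(\F_{p_{i}}\)-span of the reductions of the \(y\)-vectors has dimension at most \(k/2=(n-|\A_{i}|)/2\). This span is exactly the \(\F_{p_{i}}\)-row space of \(M_{i}\), so its dimension is \(\rank_{\F_{p_{i}}}M_{i}\), and the claim follows. There is no genuine obstacle here; the only care needed is the bookkeeping that the self-pairing condition holds for the \(y\)'s, and that the \(x\)-list has exactly \(n-k\) vectors, which is how \(k\) was chosen.
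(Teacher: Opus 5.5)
Your proposal is correct and matches the paper's proof: both apply Lemma~\ref{lem:orthogonality} modulo \(q_{i}\) with the incidence vectors of \(\A_{i}\) as the \(x\)-vectors, those of \(\A'_{i}\) as the \(y\)-vectors, and \(k=n-|\A_{i}|\), then identify the span of the reduced \(y\)-vectors with the \(\F_{p_{i}}\)-row space of \(M_{i}\). Your explicit check that \(0\leq k\leq n\) via \(|\A_{i}|\leq n\) is a small point the paper leaves implicit.
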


\begin{poc}
Apply Lemma~\ref{lem:orthogonality} with the incidence vectors of \(\A_{i}\) as the \(x\)-vectors and those of \(\A'_{i}\) as the \(y\)-vectors. The \(x\)-vectors have nonzero self inner products modulo \(q_{i}\). Every \(y\)-vector has zero self inner product modulo \(q_{i}\), and all inner products between distinct rows vanish modulo \(q_{i}\) because they vanish modulo \(\ell\). There are \(|\A_{i}|=n-k\) vectors of the first kind, so \(k=n-|\A_{i}|\). The span of the reductions of the \(y\)-vectors modulo \(p_{i}\) is exactly the row space of \(M_{i}\) over \(\F_{p_{i}}\). Thus the conclusion of the lemma gives the desired claim.
\end{poc}

If \(|\A'_{i}|<\frac{n}{2}\), then \(|\A|=|\A_{i}|+|\A'_{i}|<\frac{3n}{2}\). Since \(\omega(\ell)\geq2\), this is stronger than \(\omega(\ell)n-c_{\ell}\log n\log\log n\) for any fixed \(c_{\ell}>0\) and all sufficiently large \(n\). We may therefore assume \(|\A'_{i}|\geq\frac{n}{2}\). Note that every member of \(\A'_{i}\) has size divisible by \(q_{i}\) but not by \(\ell\), so some other prime power \(q_{j}\) fails to divide its size. Hence \(\A'_{i}\subseteq\bigcup_{j\neq i}\A_{j}\). By the pigeonhole principle, there is some \(j\neq i\) for which \(\mathcal{B}:=\A'_{i}\cap\A_{j}\) has size
\begin{equation}\label{eq:upper-large-subfamily}
t:=|\mathcal{B}|\geq\frac{|\A'_{i}|}{\omega(\ell)-1}\geq\frac{n}{2(\omega(\ell)-1)}.
\end{equation}

\begin{claim}\label{clm:upper-minor}
The matrix \(M_{i}\) contains a rationally nonsingular \(t\times t\) Boolean submatrix \(P\), where \(t\geq\frac{n}{2(\omega(\ell)-1)}\).
\end{claim}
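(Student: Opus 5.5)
The plan is to look at the rows of \(M_{i}\) indexed by \(\mathcal{B}=\A'_{i}\cap\A_{j}\) and show that they are rationally independent. The incidence vectors \(\boldsymbol{1}_{B}\) for \(B\in\mathcal{B}\) have nonzero self inner products modulo \(q_{j}\), since \(|B|\not\equiv0\pmod{q_{j}}\) by the definition of \(\A_{j}\). For distinct \(B,B'\in\mathcal{B}\) we have \(|B\cap B'|\equiv0\pmod{\ell}\), hence also \(|B\cap B'|\equiv0\pmod{q_{j}}\). So Lemma~\ref{lem:independence}, applied with the prime power \(q_{j}\), shows that these \(t\) vectors are linearly independent over \(\Q\).

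It follows that the \(t\times n\) Boolean submatrix \(M_{i}[\mathcal{B},[n]]\), consisting of the rows of \(M_{i}\) indexed by \(\mathcal{B}\), has rational rank \(t\). Its column rank is therefore also \(t\), so we can pick \(t\) columns \(C\subseteq[n]\) that are linearly independent over \(\Q\). The resulting \(t\times t\) submatrix \(P=M_{i}[\mathcal{B},C]\) is Boolean, since it is a submatrix of an incidence matrix. It is nonsingular over \(\Q\) because its column rank is \(t\). The lower bound \(t\geq\frac{n}{2(\omega(\ell)-1)}\) is exactly \eqref{eq:upper-large-subfamily}.

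There is no real obstacle here. The only point needing care is that we use the modulus \(q_{j}\) rather than \(q_{i}\): the members of \(\mathcal{B}\) are isotropic modulo \(q_{i}\), so Lemma~\ref{lem:independence} would fail with \(q_{i}\). Choosing \(\mathcal{B}\subseteq\A_{j}\) is precisely what makes the independence available.
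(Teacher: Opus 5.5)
Your proposal is correct and follows essentially the same route as the paper: you apply Lemma~\ref{lem:independence} modulo \(q_{j}\) to get rational independence of the incidence vectors of \(\mathcal{B}\), then choose \(t\) independent (pivot) columns to obtain a nonsingular \(t\times t\) Boolean submatrix of \(M_{i}\). The bound on \(t\) comes from \eqref{eq:upper-large-subfamily}.
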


\begin{poc}
The incidence vectors of \(\mathcal{B}\) are linearly independent over \(\Q\) by Lemma~\ref{lem:independence}, applied modulo \(q_{j}\). Hence the \(t\times n\) incidence matrix of \(\mathcal{B}\) has row rank \(t\) over \(\Q\). Choosing \(t\) pivot columns produces a \(t\times t\) submatrix \(P\) with nonzero rational determinant. %Notice the change of coefficient fields: \(q_{j}\) is used only to prove independence over \(\Q\), whereas the rank of \(P\) below is measured over the different field \(\F_{p_{i}}\). 
Since \(\mathcal{B}\subseteq\A'_{i}\), the matrix \(P\) is a submatrix of \(M_{i}\), and \eqref{eq:upper-large-subfamily} gives the asserted lower bound for \(t\).
\end{poc}

Note that the quantity \(t\) is at least a positive constant multiple of \(n\), where the constant depends only on \(\ell\). Since \(P\) is obtained from \(M_{i}\) by deleting rows and columns, its rank over \(\F_{p_{i}}\) is at most that of \(M_{i}\). Lemma~\ref{lem:cross-rank} now yields
\begin{equation}\label{eq:lower-rank}
    \rank_{\F_{p_{i}}}M_{i}\geq\gamma_{p_{i}}\log t\log\log t\geq c'_{\ell}\log n\log\log n
\end{equation}
for a constant \(c'_{\ell}>0\) and all sufficiently large \(n\); here we used \(t\geq\frac{n}{2(\omega(\ell)-1)}\), which gives \(\log t\geq\frac{1}{2}\log n\) and \(\log\log t\geq\frac{1}{2}\log\log n\) once \(n\) is large enough. Combining the lower rank bound \eqref{eq:lower-rank} with Claim \ref{clm:upper-deficit} then gives \[|\A_{i}|\leq n-2c'_{\ell}\log n\log\log n.\]
Recall that every other \(\A_j\) has size at most \(n\), and consequently we obtain
\[
|\A|\leq\sum_{h=1}^{\omega(\ell)}|\A_{h}|\leq\omega(\ell)n-2c'_{\ell}\log n\log\log n.
\]
This proves the upper bound with \(c_{\ell}=2c'_{\ell}\).
\end{proof}

\paragraph{Acknowledgement.} 
The author would like to thank Prof. Zixiang Xu for helpful discussions.

%\bibliographystyle{abbrv}
%\bibliography{bib}
\end{document}